\newtheorem{theorem}{Theorem}
\theoremstyle{definition}
\theoremstyle{lemma}
\theoremstyle{remark}
\newtheorem{remark}[theorem]{Remark}
\numberwithin{theorem}{section}
\numberwithin{equation}{section}
\numberwithin{table}{section}
\numberwithin{figure}{section}
\definecolor{myBlue}{RGB}{113,104,238} 
\definecolor{myGreen}{RGB}{154,205,50} 
\definecolor{myGreen2}{RGB}{114,175,30} 
\definecolor{myRed}{RGB}{180,50,50}  
\definecolor{myOrange}{RGB}{225,92,22} 
\definecolor{lgray}{RGB}{200,200,200} 
\definecolor{llgray}{RGB}{175,175,175} 
\newcommand\N{\mathbb N}
\newcommand\R{\mathbb R}
\DeclareMathOperator{\supp}{supp}
\newcommand\calN{\mathcal N}
\newcommand\calO{\mathcal O}
\newcommand\calP{\mathcal P}
\newcommand\calT{\mathcal T}
\newcommand\calV{\mathcal V}
\newcommand\eps{\varepsilon}
\def\dx{\,\text{d}x}
\newcommand\Neigs{N_\text{eigs}}
\newcommand\Kpre{K_\text{pre}}   
\newcommand\Kpost{K_\text{post}}
\newcommand\V{\calV}
\newcommand\Vvert{{|\hskip-0.9pt|\hskip-0.9pt|}}
\definecolor{myRed2}{RGB}{165,42,42} 
\definecolor{mycolor5}{rgb}{0.46600,0.67400,0.18800}
\begin{document}
\title[Localized computation of Schr\"odinger eigenstates]{Localized computation of eigenstates of random Schr\"odinger operators}
\author[]{R.~Altmann$^*$, D.~Peterseim$^*$}
\address{${}^{*}$ Department of Mathematics, University of Augsburg, Universit\"atsstr.~14, 86159 Augsburg, Germany}
\email{\{robert.altmann, daniel.peterseim\}@math.uni-augsburg.de}

\date{\today. Accepted for publication in SIAM J. Sci. Comput.}
\begin{abstract}
This paper concerns the numerical approximation of low-energy eigenstates of the linear random Schr\"odinger operator. Under oscillatory high-amplitude potentials with a sufficient degree of disorder it is known that these eigenstates localize in the sense of an exponential decay of their moduli. We propose a reliable numerical scheme which provides localized approximations of such localized states. The method is based on a preconditioned inverse iteration including an optimal multigrid solver which spreads information only locally. The practical performance of the approach  is illustrated in various numerical experiments in two and three space dimensions and also for a non-linear random Schr\"odinger operator.
\end{abstract}
%
%
\maketitle
{\tiny {\bf Keywords. random potential, iterative eigensolver, multigrid solver}} \\
\indent
{\tiny {\bf AMS subject classifications.}  {\bf 65N25}, {\bf 65N55}, {\bf 47B80}} 
%
%
%
%
\section{Introduction}
This paper concerns the numerical approximation of essentially localized eigenstates of the linear Schr\"odinger eigenvalue problem 
\begin{align}
\label{eq:EVP:strong}
  -\Delta u + Vu = \lambda u
\end{align}
on a bounded domain~$D\subseteq \R^d$ with homogeneous Dirichlet boundary conditions. The non-negative variable coefficient $V$ represents an external potential reflecting a high degree of disorder. 
This apparently simple problem is relevant in the context of quantum-physical processes related to ultracold bosonic or photonic gases, known as Bose-Einstein condensates~\cite{Bos24,Ein24,DalGPS99, PitS03}. A Bose-Einstein condensate (BEC) is an extreme state of matter formed by a dilute gas of bosons at ultra-cold temperatures, very close to absolute zero. In a BEC, individual particles (i.e., their wave packages) overlap, lose their identity, and form one single super atom. BECs allow to study macroscopic quantum phenomena such as superfluity (i.e., the frictionless flow of a fluid) on an observable scale. When BECs are trapped in a highly oscillatory high amplitude potential $V$ that exhibits a sufficiently large degree of disorder, the low-energy stationary states essentially localize in the sense of an exponential decay of their moduli. This localization is a universal wave phenomenon referred to as Anderson localization~\cite{And58}. For BECs, it has been observed experimentally in~\cite{FalFI08}.

On a mathematical level, the formation of stationary quantum states can be modeled by the slightly more involved Gross-Pitaevskii eigenvalue problem (GPEVP). In non-dimensional form, the GPEVP seeks $L^2$-normalised eigenfunctions $u \in H_0^1(D)$ and corresponding eigenvalues (so-called chemical potentials) $\lambda \in \mathbb{R}$  such that 
\begin{align*}
 - \triangle u + V u + \delta |u|^2 u = \lambda u.
\end{align*}
The parameter $\delta\ge0$ resembles the strength of repulsive particle interactions depending on physical properties of the particles that form the BEC. The linear case ~\eqref{eq:EVP:strong} corresponds to the regime of vanishing particle interaction ($\delta=0$). Since the interactions are weak for BECs, the linear case provides very good approximations of actual BEC states~\cite{Rog13}. This is why we will mostly consider the numerical solution of the linear eigenvalue problem \eqref{eq:EVP:strong}. However, the numerical techniques can be generalized to the nonlinear setting which will be demonstrated through numerical experiments at the end of this paper.

The numerical approximation of localized Schr\"odinger eigenstates has recently caused a large interest in the fields of computational physics and scientific computing. Fundamentally novel methodologies have been developed to cope with the intrinsic difficulty coming from the multiscale nature of the potential~\cite{ArnDJMF16,Ste17, ArnDFJM19,XieZO18ppt}. They are based on the link of the groundstate~$u_1$, normalized by~$\| u_1 \|_{L^{\infty}(D)}=1$, and the so-called landscape function~$\psi \in H^1_0(D)$ defined through of the homogeneous elliptic equation~$-\Delta \psi + V\psi=1$, cf.~\cite{FilM12}. The landscape function $\psi$ gives rise to surprisingly sharp eigenvalue bounds and its local maxima indicate regions where localization may occur.  
While the new techniques based on landscape functions are empirically successful in predicting the eigenvalues they lack any control on the accuracy of the approximation. In particular the approximation of the states is rather sketchy. 

The present paper aims at a more sophisticated method leading not only to the regions of localization but also to actual approximations of the lowermost eigenstates. The starting point is the recent rigorous a priori prediction of exponentially localized low-energy states caused by the interplay of disorder (randomness) and high amplitude (contrast) of the potential trap $V$ \cite{AltHP18ppt}. The results of \cite{AltHP18ppt} employ the convergence analysis of a preconditioned inverse iteration in the spirit of iterative numerical homogenization \cite{KorY16} and thereby provides a role model for efficient simulation. 
The main idea is to exploit the localization property, meaning that these eigenfunctions can be approximated in a sophisticated manner by functions supported in the union of only a few small sub-domains. The main contribution of this paper is the finding of an appropriate starting subspace and the construction of a preconditioned eigensolver which only relies on local operations. In this way, we are able to approximate the eigenstates of lowest energy roughly at the same costs as the computation of the landscape function~$\psi$. In addition, the algorithm is applicable for the nonlinear case. 

The first step is a finite element discretization of the eigenvalue problem~\eqref{eq:EVP:strong} which we discuss in Section~\ref{sec:evp}. For this, we consider a uniform refinement of the~$\eps$-mesh on which the potential~$V$ is defined. This automatically provides a mesh hierarchy for which we define a multigrid based preconditioner. Using only a small number of smoothing steps and no direct solves, one multigrid step preserves locality in the sense that the support of the resulting function is only slightly larger than the initial function. This then leads to a {\em localization preserving} eigensolver introduced in Section~\ref{sec:precond}. In other words, the simple yet efficient trick is to consider a multigrid preconditioner on a hierarchy of meshes starting from the $\eps$-level. This sufficed to be optimal in the sense of an $\mathcal{O}(1)$ condition number of the preconditioned operator. If no direct solver is used on the coarsest level, this preconditioner prevents the otherwise global communication of a standard multigrid involving coarser levels. 
The overall algorithm consists of two parts. First, we apply in parallel the localization preserving iteration scheme to finite element basis functions on a coarse grid. Based on the energies we select the most promising candidates indicating the regions of localization. Second, we consider a Ritz-Rayleigh iteration on the remaining local functions, i.e., we project the eigenvalue problem on a very small subspace. This procedure is applicable in any dimension and thus, allows to compute eigenfunctions for~$d=3$ where standard eigensolvers break. This and further numerical experiments are subject of Section~\ref{sec:numerics}.
%
%
\section{Schr\"odinger Eigenvalue Problem}\label{sec:evp}
This section is devoted to the linear Schr\"odinger eigenvalue problem~\eqref{eq:EVP:strong} and the introduction of needed finite element spaces. Further, we discuss localization effects of the lowermost eigenfunctions if the potential contains disorder. 
%
%
\subsection{Model problem}\label{sec:evp:model}
In this paper, we consider the~$d$-dimensional linear eigenvalue problem of Schr\"odinger type with a potential  being highly oscillatory and of large amplitude. In particular, we assume that the potential includes some kind of disorder such that the first few eigenfunctions of lowest energy localize. 

The variational formulation corresponding to the eigenvalue problem~\eqref{eq:EVP:strong} reads as follows: Given a non-negative potential $0\leq V\in L^\infty(D)$, find non-trivial eigenpairs $(u, \lambda) \in \V\times\R$ with search and test space~$\V := H^1_0(D)$ such that 
\begin{align}
\label{eq:EVP:weak}
  a(u, v) 
  := \int_{D} \nabla u(x) \cdot \nabla v(x) + V(x)\, u(x) v(x) \dx 
  = \lambda\, (u, v) 
\end{align}
for all test functions $v \in \V$. Here, $(\cdot,\cdot)$ denotes the~$L^2$-inner product on~$D$ and eigenfunctions are assumed to be normalized in the~$L^2$-norm. Further, we assume that the eigenvalue are ordered, i.e., $0 <\lambda_1 < \lambda_2 \le \dots$. 

As far as the theory is concerned, we focus on a representative class of potentials which are piecewise constant with respect to a mesh~$\calT^\eps$ consisting of cubes with side length~$\eps\ll 1$. For the sake of simplicity we assume~$\eps=2^{-\ell_\eps}$ where~$\ell_\eps$ denotes the level of the~$\eps$-scale. On each cube in~$\calT^\eps$, the potential takes (randomly) a value in the interval~$[\alpha, \beta]$ with moderate $0\le \alpha \approx 1$ and large $\beta$ in the sense of~$\beta \gtrsim \eps^{-2}$. We emphasize that the resulting potentials are highly oscillatory due to the underlying mesh on $\eps$-scale. Examples of such random potentials are shown in Figure~\ref{fig:pot}. In the field of matter waves, one often considers disorder potentials created optically by using speckle patterns~\cite{BoiMFGSG99,LyeFMWFI05,FalFI08}. This means that a laser beam is transmitted trough a diffusive plate forming randomized and high-contrast patterns. Within this paper, we imitate such speckles by piecewise constant potentials with respect to a quadrilateral mesh consisting of cubes with side length~$\eps$. On each of these cubes the potential takes random values following certain statistical assumptions, cf.~\cite{Goo75,DunKW08}. 
%
\begin{figure}
\includegraphics[width=4.8cm, height=4.8cm]{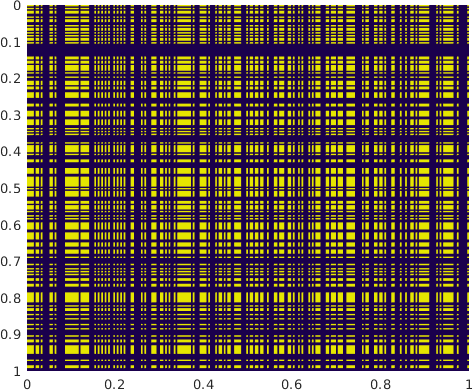}
\includegraphics[width=4.8cm, height=4.8cm]{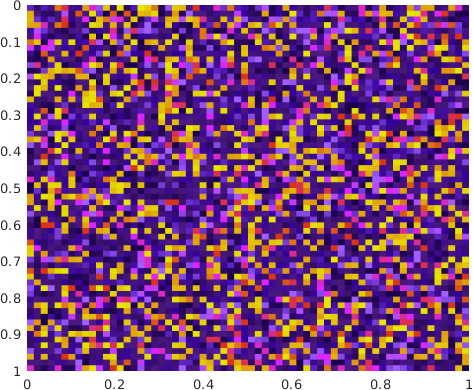}
\includegraphics[width=4.8cm, height=4.8cm]{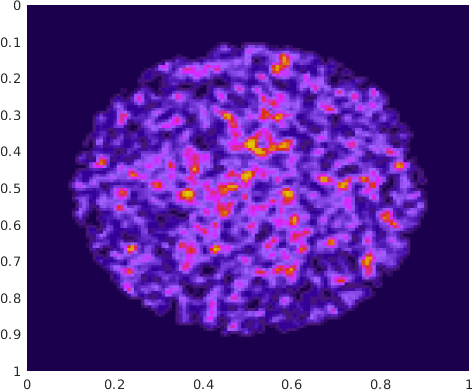}
\caption{Three examples of prototype disorder potentials: a 1D-tensorized potential with only two values (left), a fully random potential (middle), and a speckle potential (right). In all cases, dark color implies a large value of the potential up to the maximum~$\beta$.} 
\label{fig:pot}
\end{figure}

Within this paper, $\Vert\cdot\Vert:=\sqrt{(\cdot,\cdot)}$ denotes the standard~$L^2$-norm on $D$, whereas the~$V$-weighted~$L^2$-norm is given by 
\[
  \Vert v \Vert_V^2 
  := (Vv,v)
  = \int_D V(x)\,  |v(x)|^2\dx. 
\]
Corresponding to the Schr\"odinger operator, we define the energy norm as 
\[
  \Vvert v \Vvert^2
  := a(v, v)
  = \Vert \nabla v\Vert^2 + \Vert v\Vert_V^2.
\]
The energy of a function is characterized by the Rayleigh quotient, namely 
\begin{align*}
	\lambda(v) 
	:= \frac{a(v,v)}{\Vert v\Vert^2}
	= \frac{\Vvert v\Vvert^2}{\Vert v\Vert^2}.
\end{align*}
In case $v$ is an eigenfunction of the Schr\"odinger eigenvalue problem~\eqref{eq:EVP:weak}, the energy~$\lambda(v)$ is equal to the corresponding eigenvalue. The eigenfunction of minimal energy~$\lambda_1 := \min_{v\in \V \setminus \{0\}} \lambda(v)$ is called the groundstate. 
%
%
\subsection{Exponential decay of the Green's functions}\label{sec:evp:green}
For certain classes of potentials it is known that the corresponding Green's function of the Schr\"odinger operator decays exponentially. For constant potentials~$V\equiv \beta$ with sufficiently large~$\beta$ this was shown in~\cite[Lem.~3.2]{Glo11}. 

Piecewise constant potentials with two values~$\alpha$ and~$\beta$ were considered in~\cite{AltHP18ppt}. For this, an operator preconditioner was constructed depending on the geometric structure of the potential, i.e., on the interaction of $\alpha$-valleys and~$\beta$-peaks. It was designed in such a way that the application of this operator preconditioner only enlarges the support by a small amount of~$\eps$-layers within~$\calT^\eps$. With this, one can show that the weak solution~$u\in \calV$ of the variational problem $a(u,\cdot) = (f,\cdot)$ decays exponentially fast around the support of~$f$ for potentials under certain statistical assumptions (including, e.g., 1D-tensorized potentials as in Figure~\ref{fig:pot}). More precisely, this means that 
\[  
	\Vvert u\Vvert_{D\setminus B^\infty_{p\eps L}(\supp f)}
	\le c\, \gamma^{p}\, \Vvert u\Vvert
\]
for constants~$c>0$ and~$0<\gamma<1$ independent of~$\eps$, $L\approx \log(1/\eps)$, and $B^\infty_r(z)$ denoting the ball of radius~$r$ around~$z$. We emphasize that this result only relies on~$\eps$ being small (oscillatory) and~$\beta$ being large (high amplitude). Thus, disorder does not play a role for the exponential decay of the Green's function. 
%
%
\subsection{Localization of eigenfunctions}\label{sec:evp:localization}
For the localization of eigenfunctions, the potential needs to include a certain degree of disorder. For a periodic potential the lowermost part of the spectrum of the Schr\"odinger operator is clustered. In the one-dimensional case, which is illustrated in Figure~\ref{fig:decay}, one can prove that there exist about~$\eps^{-d}/2$ (the number of potential valleys) eigenvalues in the energy range of~$\eps^{-2}$. Disorder changes the picture dramatically and leads to significant spectral gaps already within the first few eigenvalues. In one space dimension there is a one-to-one correspondence between the largest~$\alpha$-valleys of the potential and the smallest eigenvalues. This is illustrated by dotted vertical lines which correspond to the largest~$\alpha$-valleys and coincide with the spectral gaps for the random potential. This then leads to the exponential decay of the lowermost eigenfunctions, cf.~Figure~\ref{fig:decay}.
%
\begin{figure}
\includegraphics[width=4.8cm, height=4.8cm]{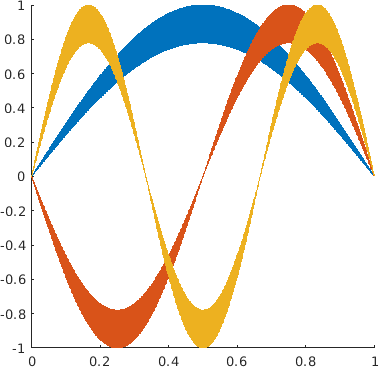}
\includegraphics[width=4.8cm, height=4.8cm]{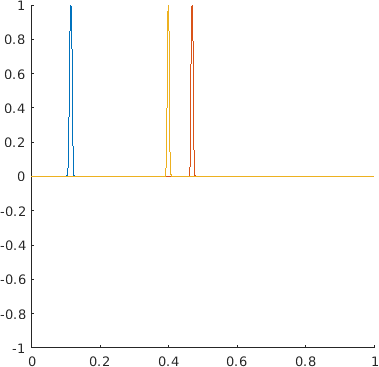}
\includegraphics[width=4.8cm, height=4.8cm]{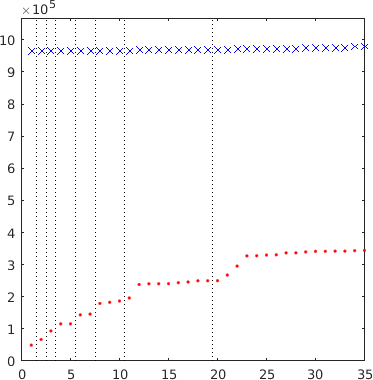}
\caption{The first three eigenfunctions for a periodic (left) and random (middle) potential in the one-dimensional case with~$\ell_\eps = 10$ and $\beta=2\cdot \eps^{-2}$. The lower part of the spectrum for both cases (periodic {\color{blue}$\times$} and random ${\color{red}\cdot}$\,) is shown on the right, clearly depicting the different nature of the two cases. The dotted vertical lines indicate for the random case that there is a single valley of largest, second largest, and third largest diameter, two valleys of fourth largest diameter, and so forth.} 
\label{fig:decay}
\end{figure}

In~\cite{AltHP18ppt} the localization of eigenfunctions was rigorously proven for the regime~$\beta\gtrsim\eps^{-2}$ and certain statistical assumptions on the potential. The key ingredient was to prove the existence of spectral gaps in the presence of disorder in combination with a preconditioned block inverse iteration. For this, particular finite element spaces and quasi-interpolation operators were considered. Further, the connection of valley sizes and eigenvalues was used. More precisely, we considered the first eigenfunctions of the Laplacian in the largest~$\alpha$-valleys with homogeneous Dirichlet boundary conditions to obtain eigenvalue estimates of the Schr\"odinger operator. 
These Laplace eigenfunctions also serve as stating functions within the preconditioned block inverse iteration. The claimed decay then follows from the exponential convergence with respect to the number of iteration steps.

The exponential decay may also be characterized a posteriori in terms of the landscape function~$\psi \in \calV$ defined through 
\begin{align}
\label{eqn:defLandscape}
  a(\psi,v)  = (1, v)
\end{align}
for all test functions~$v\in\calV$, cf.~\cite{FilM12}. The connection to the eigenvalue problem~\eqref{eq:EVP:strong} gets visible by the possible reformulation as an eigenvalue problem with the effective confining potential~$1/\psi$, which encodes the decay of the eigenfunction in some Agmon measure~\cite{ArnDJMF16}. This means that the eigenfunction reduces by a certain factor when crossing a valley of~$1/\psi$. This interplay has also been analyzed in~\cite{Ste17} using averages over local Brownian motion paths. 
%
%
\subsection{Finite element discretization}\label{sec:evp:fem}
In order to approximate the (localized) eigenstates of the Schr\"odinger operator, we consider a finite element discretization of~\eqref{eq:EVP:weak}. For this, we consider a uniform refinement of~$\calT^\eps$, namely~$\calT^h$, consisting of cubes with side length~$h=2^{-\ell_h}\le \eps$. 
The corresponding set of nodes is denoted by~$\calN^h$ and the set of interior nodes by~$\calN^h_0$.  

Based on~$\calT^h$, we define~$V_h := Q_1(\calT^h) \cap \calV \subseteq \calV$ as the conforming~$Q_1$-finite element space, cf.~\cite[Sect.~3.5]{BreS08}. This space has the dimension~$n:=|\calN^h_0|$ and is spanned by the standard~$Q_1$ hat functions, which we denote by~$\varphi^h_z$ for~$z\in\calN^h_0$. Recall that~$\varphi^h_z$ is a piecewise polynomial of partial degree one with $\varphi^h_z(z)=1$ and $\varphi^h_z(w)=0$ for any other node $w\in\calN^h\setminus\{z\}$. Clearly, hat functions can also be defined on the original mesh~$\calT^\eps$ w.r.t.~its set of interior nodes~$\calN^\eps_0$. For these hat functions we write~$\varphi^\eps_z$ for~$z\in\calN^\eps_0$. 

We now apply a Galerkin ansatz to the eigenvalue problem~\eqref{eq:EVP:weak}. Thus, we restrict the trial and test space to~$V_h$, which then leads to a discrete eigenvalue problem of the form  
\begin{align}
\label{eq:EVP:FEM}
  A_h u_h 
  = \lambda_h M_h u_h.
\end{align}
Here, $A_h \in \R^{n, n}$ and $M_h \in \R^{n, n}$ denote the symmetric stiffness and mass matrices defined through
\[
  (A_h)_{ij} 
  := a(\varphi^h_{z_i}, \varphi^h_{z_j}),\qquad
  (M_h)_{ij} 
  := (\varphi^h_{z_i}, \varphi^h_{z_j}).
\]
Note that the stiffness matrix already includes the potential. Eigenpairs of the matrix eigenvalue problem~\eqref{eq:EVP:FEM} approximate eigenpairs of the PDE eigenvalue problem~\eqref{eq:EVP:weak}. Due to the min-max principle of eigenvalues based on the Rayleigh quotient, we know that~$\lambda_1 \le \lambda_{h,1}$.  

As mentioned above, we consider highly oscillatory potentials meaning that~$\eps$ is small. Further, the mesh size~$h$ needs to sufficiently small compared to~$\eps$ in order to resolve the oscillations of the potential and thus, guarantees a reasonable approximation of the eigenpairs. Such a condition on the minimal resolution are justified by standard a priori error analysis. If $D$ is convex, any eigenfunction $u\in H^1_0(D)$ is $H^2$ regular. When normalized in $L^2(D)$, $u$ satisfies the bound
\[
  \|D^2u\|
  \leq \|\Delta u\|
  = \|\lambda u - Vu\|
  \leq \lambda+\beta,
\]
where $\lambda$ is the eigenvalue it corresponds to. For the lowermost eigenvalue in the regime of~\cite{AltHP18ppt} where~$\lambda_1\approx\beta\approx\eps^{-2}$ this leads to an error bound
\begin{equation*}
  \frac{\lambda_{h,1}-\lambda_1}{\lambda_1}
  \lesssim \frac{h^2}{\eps^2} +  \frac{h^4}{\eps^4}, 
\end{equation*}
see, e.g., \cite[Lem.~6.1]{StrF73}. While the hidden constant in this bound depends only on the quasi-uniformity of the mesh, multiplicative constants in bounds for larger eigenvalues or any eigenfunction may deteriorate with the distance to neighboring eigenvalues in the case of clustered eigenvalues~\cite{MR3107358}. In this sense the resolution condition $h\lesssim\eps$ is minimal for the approximation of the lowermost eigenvalue and may be much more restrictive in other cases. As a result, the eigenvalue problem~\eqref{eq:EVP:FEM} is of large dimension, which makes the solution computationally costly. 

As we are interested in the smallest eigenvalues and the corresponding localized eigenstates, we aim to construct a preconditioner based on local operations. Such a {\em localization preserving} preconditioner then allows parallel computations and is subject of the following section.
%
%
\section{Localization Preserving Preconditioner}\label{sec:precond}
In~\cite{AltHP18ppt} we have presented a locally operating preconditioner based on a domain decomposition of~$D$, which was a key ingredient for the proof of the exponential decay of the first eigenstates. The construction, however, was tailored to theoretical needs and checkerboard potentials. It included exact projections of local sub-domains. For actual computations, it turns out that already a simple multigrid preconditioned iteration may act as a localized version of a block inverse power method leading to sophisticated approximation results. In order to preserve locality, the coarsest level used within the multigrid cycle is the mesh~$\calT^\eps$ on which the potential is defined. This still yields an optimal preconditioner due to the properties of the Green's function of the Schr\"odinger operator.  

Further, we discuss how to find a low-dimensional starting subspace of local functions with which we can initialize the eigenvalue iteration. This is a crucial step in order to obtain local approximations of the lowermost eigenfunctions. 
%
\subsection{Multigrid preconditioner}\label{sec:precond:multigrid}
Recall that we have assumed that the finite element mesh~$\calT^h$ is defined by a uniform refinement of the mesh on $\eps$-level on which the potential is defined. Thus, we have a mesh hierarchy given by~$\calT^h$, possible intermediate meshes, and~$\calT^\eps$. For this hierarchy, we define a standard geometric multigrid method without direct solve on the coarsest level, which will serve as a preconditioner for the eigenvalue iteration. A major aspect is that the multigrid method maintains locality, i.e., for a local right-hand side~$b$ the resulting approximate solution of~$A_h^{-1}b$ is supported on a domain which is only slightly larger than the support of~$b$. 

Given the mesh hierarchy, we consider a so-called V-cycle with only one smoothing step on each level, cf.~\cite{Hac85, Mcc87,Yse93}. Starting with a vanishing starting vector, we consider the residuals of~$A_h x = b$ on different levels of the hierarchy. On the coarsest level, i.e., on~$\calT^\eps$, we run one single relaxation step. This means that we do not use a direct solver on~$\eps$-level with the stiffness matrix~$A_\eps$ but use instead a single step of the Jacobi iteration. This yields a reasonable approximation, since the condition number of~$A_\eps$ is of order~$1$. The reason for this is the shift by the potential in the bilinear form~$a$ in~\eqref{eq:EVP:weak} and the fact that~$\beta \gtrsim \eps^{-2}$. A computational study of the condition numbers of the stiffness matrices~$A_h$ and~$A_\eps$ is shown in Figure~\ref{fig:cond}. We emphasize that a direct solver would ruin the locality immediately.

On each level of the multigrid cycle the support of the iterate grows slightly due to the application of the stiffness matrix on the respective level. In total, the application of this preconditioner enlarges the support by strictly less than three layers of $\eps$-cubes. 

It is well-known that multigrid solvers with a reduced tolerance (we only perform one V-cycle with a single relaxation step per level) can be used efficiently as a preconditioner within an external iterative solver, leading to methods such as {\em pcg} or {\em lopcg}, cf.~\cite{KnyN03}. To keep things simple, we concentrate on pcg. We denote the application of~$j$ steps of pcg with the multigrid preconditioner by~$\calP_j$. Thus, $\calP_j(b)$ yields an approximation of~$A_h^{-1} b$, where the accuracy can be controlled by the number of iteration steps. We emphasize that~$\calP_j$ preserves locality in the following sense. 
\begin{theorem}[Localization preserving property of~$\calP_j$]
\label{thm_pcg}
Assume that~$x\in \R^n$ is the coefficient vector of a local function~$u_h \in V_h \subset \calV$. Then, $\calP_j(x)$ is the representative of a function in~$V_h$ with support being at most~$3j$~layers of $\eps$-cubes larger than~$\supp(u_h)$.  
\end{theorem}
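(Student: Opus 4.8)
The plan is to track how the support of an iterate grows under each elementary operation of the preconditioned conjugate gradient method and then sum these contributions over the $j$ iterations. The key observation is that the only operations in pcg that can enlarge the support of a vector are (i) the application of the stiffness matrix $A_h$ and (ii) the application of the multigrid preconditioner $M^{-1}$ (the single V-cycle). Linear combinations, scalar multiplications, and inner products do not enlarge supports. So the whole proof reduces to bounding the support growth of these two operations and counting how often each is applied per pcg step.

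First I would recall that $A_h$ is the $Q_1$ stiffness matrix on $\calT^h$, a mesh obtained by uniform refinement of $\calT^\eps$. Multiplication by $A_h$ couples a node only to its neighbours in $\calT^h$, and since $h \le \eps$, enlarging the support by one layer of $h$-cubes enlarges it by strictly less than one layer of $\eps$-cubes; I would phrase this carefully so that the "at most" bookkeeping is honest. Next I would analyse the V-cycle: a single V-cycle consists of pre-smoothing (one Jacobi/relaxation step — support growth bounded by one $h$-layer on the finest level), restriction down the hierarchy to $\calT^\eps$, one relaxation step on the coarsest level $\calT^\eps$ (growth by one $\eps$-layer), prolongation back up, and post-smoothing. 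The restriction and prolongation operators have local stencils, and crucially, because no direct coarse solve is used, information does not propagate globally; the total support enlargement of one V-cycle is bounded by (strictly less than) three layers of $\eps$-cubes, which is exactly the claim already asserted in the text preceding the theorem. I would invoke that bound rather than re-derive the multigrid stencil arithmetic, since it is stated as established.

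Then the combinatorial heart: in each pcg iteration the new search direction and iterate are assembled from the previous ones plus one application of $M^{-1}$ (to the current residual) and one application of $A_h$ (to the current search direction). The residual update $r_{k+1} = r_k - \alpha_k A_h p_k$ adds the $A_h p_k$ term, and $p_{k+1} = M^{-1} r_{k+1} + \beta_k p_k$ adds the preconditioner application. So per step the support grows by at most (V-cycle growth) $+$ ($A_h$ growth) $<$ $3$ $+$ (something strictly less than $1$) $\le 3$ layers of $\eps$-cubes — one needs to be slightly careful that the $A_h$-growth of one $h$-layer, being strictly sub-$\eps$, can be absorbed into the "strictly less than" in the three-$\eps$-layer bound for the V-cycle, or alternatively state the cruder bound that each step adds at most $3$ $\eps$-layers. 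Over $j$ steps this telescopes to at most $3j$ layers beyond $\supp(u_h)$, and since every intermediate quantity lies in $V_h$ (pcg operates within $\R^n \cong V_h$), $\calP_j(x)$ is indeed the coefficient vector of a function in $V_h$ with the stated support.

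The main obstacle I anticipate is not conceptual but a matter of making the "at most $3j$" constant genuinely tight and consistent with the preceding paragraph's "strictly less than three layers per V-cycle" claim: one must verify that the extra $A_h$-application inside each pcg step (which the standalone V-cycle analysis did not account for) does not push the per-step count above $3$, and this hinges on the slack coming from $h < \eps$ so that an $h$-layer is a strict fraction of an $\eps$-layer. A secondary subtlety is the first step, where the zero initial guess means $r_0 = x$ already has $\supp(u_h)$, and the first preconditioner application is the one that kicks off the growth — so the induction base case should be checked explicitly to confirm the count starts correctly. Neither of these is deep; the argument is essentially a disciplined accounting exercise, and I would present it as a short induction on $j$ with the per-step support-growth lemma as the inductive step.
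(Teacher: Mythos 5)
Your proof is correct and takes essentially the same route as the paper: you isolate the two support-enlarging operations in a pcg step (one V-cycle application growing the support by a radius of $3\eps-h$, and one multiplication by $A_h$ growing it by a further $h$), sum to $3\eps$ per step, and multiply by $j$. The paper's proof is just a terser version of this accounting, so no substantive difference.
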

\begin{proof}
As already mentioned, the application of a V-cycle as described above affects less than three layers of~$\eps$-cubes. More precisely, the support enlarges at most within a ball of radius~${3\eps-h}$ around~$\supp(u_h)$. The cg step involves another application of the stiffness matrix on the finest level and thus, adds only one layer of~$h$-cubes to the support. In total this leads to a growth of~$3$ layers with side length~$\eps$ per pcg step.
\end{proof}
%
\begin{figure}
%
%
\definecolor{mycolor1}{rgb}{0.00000,0.44700,0.74100}%
\definecolor{mycolor2}{rgb}{0.85000,0.32500,0.09800}%
\definecolor{mycolor3}{rgb}{0.92900,0.69400,0.12500}%
\definecolor{mycolor4}{rgb}{0.49400,0.18400,0.55600}%
\definecolor{mycolor5}{rgb}{0.46600,0.67400,0.18800}%
\begin{tikzpicture}

\begin{axis}[%
width=4.3in,
height=1.8in,
at={(0.758in,0.481in)},
scale only axis,
xmin=3,
xmax=10.2,
ymode=log,
ymin=3,
ymax=35000,
yminorticks=true,
axis background/.style={fill=white},
title style={font=\bfseries},
xlabel={level of discretization $\ell_h$},
ylabel={condition number of $A_h$},
legend style={legend cell align=left, align=left, draw=white!15!black, at={(0.21,0.96)}}
]
\addplot [color=mycolor1, very thick, mark=o, mark options={solid, mycolor1}]
  table[row sep=crcr]{%
4	4.59792828499681\\
5	18.6726271064303\\
6	76.4932467897818\\
7	307.959260073372\\
};
\addlegendentry{$\ell_\eps$ = 4}

\addplot [color=mycolor2, very thick, mark=o, mark options={solid, mycolor2}]
  table[row sep=crcr]{%
5	7.39766840724602\\
6	24.5355391445338\\
7	99.330031224846\\
8	399.233775463649\\
};
\addlegendentry{$\ell_\eps$ = 5}

\addplot [color=mycolor3, very thick, mark=o, mark options={solid, mycolor3}]
  table[row sep=crcr]{%
6	12.8243718962935\\
7	44.5591380652133\\
8	180.389346106339\\
9	724.907587215957\\
};
\addlegendentry{$\ell_\eps$ = 6}

\addplot [color=mycolor4, very thick, mark=o, mark options={solid, mycolor4}]
  table[row sep=crcr]{%
7	12.8026202850746\\
8	44.5671300513109\\
9	180.195654055876\\
10	724.504872517572\\
};
\addlegendentry{$\ell_\eps$ = 7}

\addplot [color=mycolor5, very thick, dashed, mark=triangle, mark options={solid, mycolor5}]
  table[row sep=crcr]{%
3	25.4628868875912\\
4	100.897185940403\\
5	402.653822590254\\
6	1609.68497130673\\
7	6437.81070429023\\
8	25750.3139200032\\
9	103000.326853641\\
10	412000.378604127\\
};

\end{axis}
\end{tikzpicture}%
\caption{Condition numbers of~$A_h$ for different values of~$\eps = 2^{-\ell_\eps}$ and $h = 2^{-\ell_h}$. This includes~$A_\eps$ if~$\ell_h=\ell_\eps$. The stiffness matrix is based on a random potential in 2D, only taking the values~$\alpha= 0$ or $\beta = 5\cdot \eps^{-2}$ with equal probability. The {\bf{\color{mycolor5} green dashed}} line shows the condition number of~$A_h$ without a potential.} 
\label{fig:cond}
\end{figure}
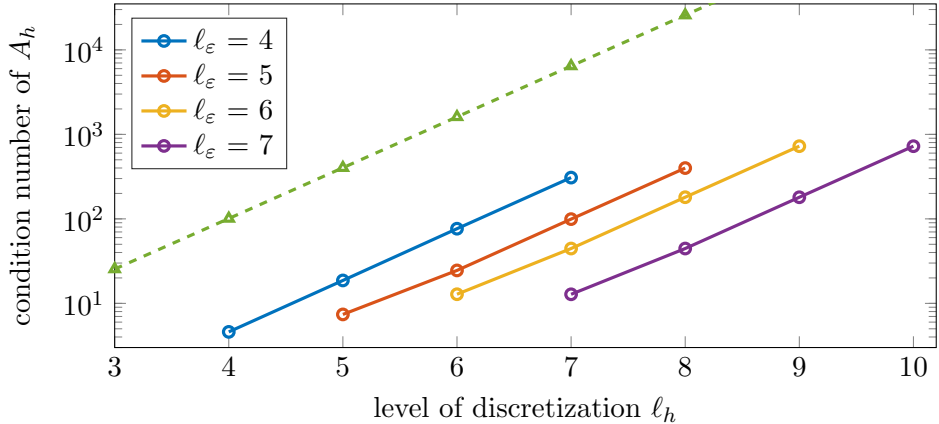
\begin{remark}
Note that the choice of the preconditioner is by far not unique and may be replaced, e.g., by local variants of hierarchical basis~\cite{Yse86} or BPX~\cite{BraPX90} preconditioners or Gamblets~\cite{XieZO18ppt}. Here, local means that~$\calT^\eps$ is the coarsest level in the underlying hierarchy of meshes.
\end{remark}
%
\subsection{Starting subspace}\label{sec:precond:start}
For an efficient eigenvalue iteration we also need a suitable starting subspace. Starting for example with a vector which has only positive entries, we will approach the groudstate but every iteration step is 'global'. Instead, we search for a number of local functions at positions where we expect localization of the first eigenfunctions. For the detection of a suitable starting subspace we discuss several approaches. 
%
\subsubsection{Landscape function}\label{sec:precond:start:Filoche}
One possibility to find spots of localization is to compute the already mentioned {\em landscape function}~$\psi \in \calV$, cf.~\cite{FilM12}. This function is defined as the solution of the Schr\"odinger source problem with right-hand side~$1$ and homogeneous Dirichlet boundary conditions, cf.~equation \eqref{eqn:defLandscape}. In other words, $\psi$ is the outcome of one step of the inverse power method in the PDE setting. The corresponding discrete approximation~$\psi_h$ satisfies 
\[
  \psi_h = A_h^{-1} M_h\, \mathbf{1}, 
\]
where $\mathbf{1} = [1,\ 1,\ \dots, 1]^T \in \R^{n}$. The landscape function shall indicate where the first eigenfunctions may localize. To see this, let~$u_1$ denote the normalized ground state of the Schr\"odinger eigenvalue problem, i.e., ~$\| u_1 \|_{L^{\infty}(D)}=1$. Then, it is shown in~\cite{FilM12} that the landscape function satisfies pointwise 
\begin{align}
\label{eq:Filoche}
  |u_1(x)| \le \lambda_1 |\psi(x)|. 
\end{align}
This bound implies that in regions where~$\psi$ is small compared to the smallest eigenvalue~$\lambda_1$, the eigenfunction $u_1$ needs to be small as well. Considering the peaks of the landscape function then provides empirically accurate predictions where to expect localization. However, since we know that~$\lambda_1 \gtrsim \eps^{-2}$, the estimate may degenerate quickly to~$|u_1(x)|\le 1 =\| u_1 \|_{L^{\infty}(D)}$. Thus, this approach does not allow rigorous predictions a priori but may serve as an indicator. 

Although the estimate~\eqref{eq:Filoche} only includes the groundstate, the landscape function has been used to locate a larger number of eigenfunctions by analyzing the local minima of~$\psi$, cf.~\cite{ArnDFJM19}. Further, it may be used to partition the computational domain~$D$ into a network of valleys, cf.~Figure~\ref{fig:landscape}. Yet another possibility is to consider multiple applications of the inverse Schr\"odinger operator, i.e., $(A_h^{-1}M_h)^k\mathbf{1}$ in the discrete setting, cf.~\cite{Ste17}.
%
\begin{figure}
\includegraphics[width=4.8cm, height=4.8cm]{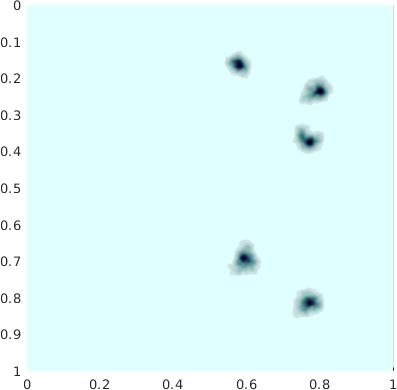}
\includegraphics[width=4.8cm, height=4.8cm]{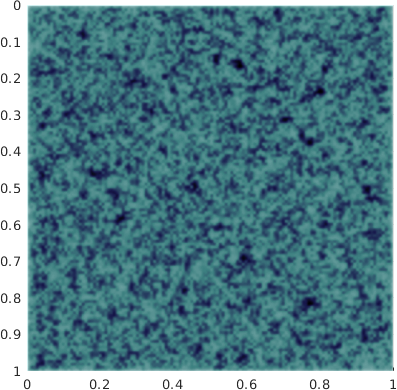}
\includegraphics[width=4.8cm, height=4.8cm]{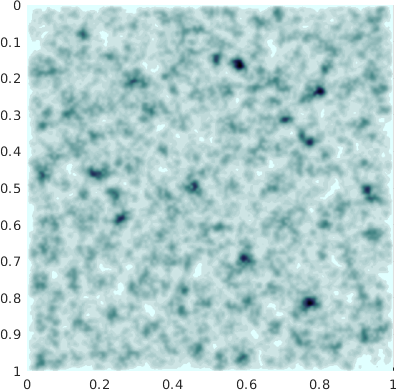}
\caption{Sum of the first five eigenfunctions (left) for a two-dimensional random potential with~$\ell_\eps=7$, $\beta = 5\cdot\eps^{-2}$ and the corresponding landscape function~$\psi$ (middle). Four additional steps of the inverse power method leads to a more selective landscape (right).} 
\label{fig:landscape}
\end{figure}
%
\subsubsection{Randomized landscape function}\label{sec:precond:start:Steinerberger}
A similar approach to detect positions of localization was introduced in~\cite{LuS18}. Therein, the inverse power method is applied to the unit vectors of dimension~$n$. More precisely, for a fixed parameter~$p\in\N$ and~$e_k\in\R^n$ denoting the~$k$-th unit vector, one computes for~$k=1,\dots,n$
\[
  f_p(k) = \log\big( \Vert (A_h^{-1}M_h)^p e_k \Vert_2 \big).
\]
In~\cite{LuS18} it was shown that highly localized eigenfunctions correspond to metastable states of the power iteration and thus, are directly connected to local maxima of the function~$f_p$. Note that in the given setting~$f_p$ computes the inverse iteration to all hat functions on the $h$-scale, which is expensive due to the size of the eigenvalue problem. 

A variant is the following randomized version: Given a random matrix~$R\in\R^{n,m}$ with $m\ll n$ we compute  
\[
  f_{R,p}(k) = \log\big( \Vert e_k^T (A_h^{-1}M_h)^p R \Vert_2 \big).
\]
Thus, we apply the inverse power method to random vectors and consider the means in each component. In other words, we replace the application of~$A_h^{-1}M_h$ to~$n$ local functions by the application to~$m$ global functions~\cite{LuS18}. 
The outcome of this approach is displayed in Figure~\ref{fig:steinerberger}. It shows a landscape function which seems more smoothed than the previous approach of Section~\ref{sec:precond:start:Filoche}. Further, an increase of the number of iterations does not focus purely on the groundstate as this would happen by applying the inverse power method to the vector~$M_h\, \mathbf{1}$. 
%
\begin{figure}
\includegraphics[width=4.8cm, height=4.8cm]{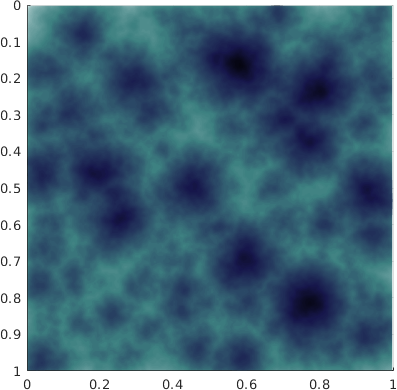}
\includegraphics[width=4.8cm, height=4.8cm]{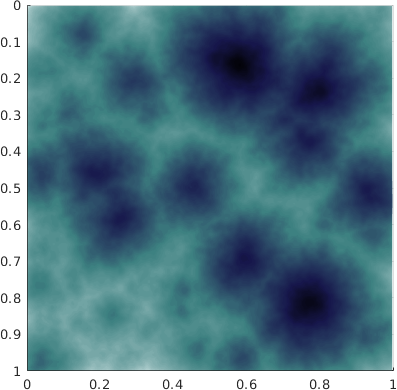}
\includegraphics[width=4.8cm, height=4.8cm]{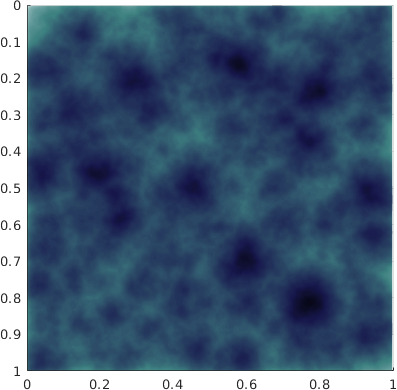}
\caption{Examples of randomized landscape functions~$f_{R,p}$ using~$m$ random vectors and~$p$ applications of the inverse power method (left:~$m=1$, $p=50$; middle:~$m=3$, $p=100$) with the same potential as used in Figure~\ref{fig:landscape}. The plot on the right shows the result for~$m=3$ and~$p=1000$ if we replace~$A_h^{-1}$ by~$I_n - A_h/\Vert A_h\Vert$.}
\label{fig:steinerberger}
\end{figure}
%
\subsubsection{Largest valleys}
Assume that the potential~$V$ is given in form of a random checkerboard, i.e., the function values of~$V$ are either~$\alpha$ or~$\beta$, each with probability~$0.5$. Then, motivated by the theoretical findings in~\cite{AltHP18ppt}, we may start with a number of hat functions distributed in the largest valleys. Here, a valley denotes a cube in which the potential is constant~$\alpha$. 
For special classes of such checkerboard potentials, this approach guarantees to yield an appropriate starting subspace, meaning that the number of initial functions is~$\calO(1)$, that these functions are local, and that the inverse power method converges quickly, cf.~\cite{AltHP18ppt}.  

However, the search for valleys comes with the drawback that one first needs to analyze the structure of the given potential. Further, a generalization of the term valley is needed to consider general potentials. Nevertheless, the idea motivates the subsequent approach using uniformly distributed hat functions. 
%
\subsubsection{Uniformly distributed hat functions}\label{sec:precond:start:hats}
The current approach combines the benefits of the previous subsections. In this manner, we obtain a suitable subspace of local functions which is highly eligible to serve as a starting point within a (preconditioned) eigenvalue iteration. At the same time, we restrict the computational costs such that it is comparable to a single step of the inverse power method. 

Let~$\calT^H$ be a mesh of cubes with side length~$H=2^{-\ell_H}\ge\eps$ such that~$\calT^\eps$ is a refinement of~$\calT^H$ and thus, $\ell_H\le \ell_\eps$. For each interior node~$z\in\calN^H_0$ there exists a~$Q_1$ hat function~$\varphi_z^H$. Similar to the approach in Section~\ref{sec:precond:start:Steinerberger} we may now apply several steps of the inverse power method. Instead, we apply a single pcg iteration step as introduced in Section~\ref{sec:precond:multigrid} to each hat function. Note that all these computations may be performed in parallel and that the application of~$\calP_1$ maintains locality in the sense that the support of~$\calP_1(\varphi_z^H)$ is only slightly larger than the support of~$\varphi_z^H$, cf.~Theorem~\ref{thm_pcg}. Although we perform only one pcg step, we gain sufficient information in order to rank the importance of the basis function. For this we compute the energies~$\lambda(\calP_1(\varphi_z^H))$ and sort them in an increasing order. Afterwards, we only keep the functions of lowest energy defined through a fixed ratio~$0<\eta<1$. This procedure is then repeated until the number of functions is small enough. 

We emphasize that the support of each function only grows gently in each step whereas the number of functions decreases by the prescribed factor~$\eta$. In other words, the proposed algorithm is approximately as costly as computing the landscape function~$\psi$, cf.~Section~\ref{sec:precond:start:Filoche}. Here, however, we directly obtain a number of candidates where the first eigenfunctions will localize without the need of constructing a network structure. An illustration of the algorithm is given in Figure~\ref{fig:hats_pre}. 
%
\begin{figure}
\includegraphics[width=4.8cm, height=4.8cm]{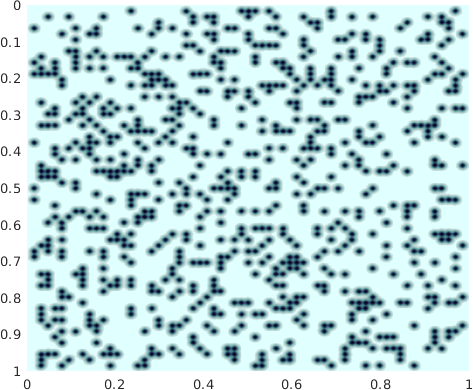}
\includegraphics[width=4.8cm, height=4.8cm]{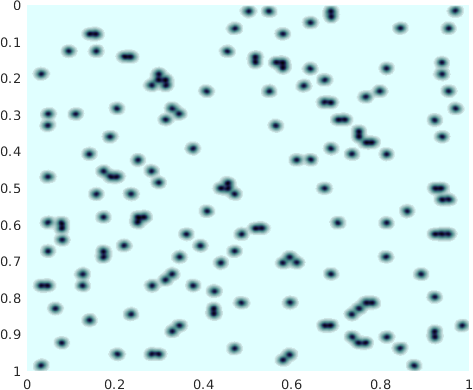}
\includegraphics[width=4.8cm, height=4.8cm]{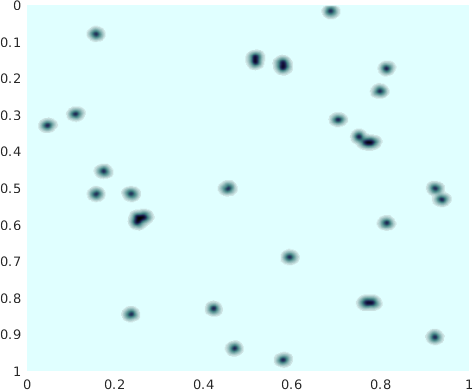}
\caption{Illustration of finding an appropriate local and low-dimensional starting subspace. Starting point are the hat functions on a coarser grid~$\calT^H$. After each pcg step a fixed amount of functions is selected. Pictures show the results for~$\eta=0.2$ after one (left), two (middle), and three (right) pcg steps.}
\label{fig:hats_pre}
\end{figure}
%
\subsection{Approximation of eigenfunctions}\label{sec:precond:alg}
In this subsection we finally introduce an algorithm to approximate the eigenfunctions of lowest energy of the linear Schr\"odinger eigenvalue problem~\eqref{eq:EVP:weak} under disorder potentials. Thus, we do not only intend to find possible regions of localization but actually compute approximations of the smallest eigenvalues and their corresponding eigenstates. 

Recall the definition of~$\calP_j$ in Section~\ref{sec:precond:multigrid} containing~$j$ pcg steps preconditioned my a multigrid cycle. Further, we fix the following parameters. \medskip

\begin{center}
\begin{tabular}{c||c||l}
parameter & default value & meaning \\[0.1\normalbaselineskip] \hline\hline
$\Neigs$ & $5$ & number of eigenfunctions to compute\\[0.15\normalbaselineskip]
$\Kpre$ & $3$ & number of pre-iteration steps\\[0.15\normalbaselineskip]
$\Kpost$ & $5$ & number of post-iteration steps\\[0.15\normalbaselineskip]
$\ell_H$ & $\ell_\eps- \lfloor \log(\ell_\eps) \rfloor$ & level of~$\calT^H$ for initial hat functions\\[0.15\normalbaselineskip]
$\eta$ & $0.2$ & ratio for selection process\\
\end{tabular}
\end{center}
\vspace{0.3cm}
%
The proposed algorithm consists of two parts: In the {\em pre-iteration}, we apply the selection process described in Section~\ref{sec:precond:start:hats}. For this, we consider hat functions on a (coarser) mesh of level~$\ell_H$. Using the hat functions corresponding to~$\calT^H$ rather than~$\calT^\eps$ reduces the number of initial functions. Further, we truncate these hat functions in order improve the locality. After~$\Kpre$ iteration steps with ratio~$\eta$ we then have a much smaller number of functions, which already give a rough approximation of certain eigenfunctions. Most notably, however, they indicate with high precision where localization will take place without the need of constructing an additional mesh or finding local minima as with the landscape approach. If we are only interested in the groundstate, then one may reduce the number of functions until only a single function is left. Otherwise, we need to keep a sufficiently large amount of functions within the iteration process in order to prevent that only the groundstate is approximated. Further, since we only perform rough approximations of the inverse power method, we cannot guarantee that the functions of lowest energy correspond to the lowermost eigenstates. Therefore, we always keep at least~$3\,\Neigs$ functions within the pre-iteration.

In the {\em post-iteration}, we combine all functions which were selected by the pre-iteration in form of a Ritz-Rayleigh approximation. This means that we first perform three preconditioned cg steps to each function and then project the mass and stiffness matrices to the span of these functions and solve a (small) eigenvalue problem of the form~$\mathbf{M} \alpha = \mu \mathbf{A} \alpha$. Note that the dimension of the eigenvalue problem depends on the number of pre-iteration steps. The first eigenvector~$\alpha_1$ contains the coefficients of the optimal linear combination of the ansatz functions and provides the approximate groundstate. The second eigenvector~$\alpha_2$ then defines the function which serves as approximation of the second eigenstate and so on. Further, we decrease the number of ansatz functions by cutting off the candidates of highest energy. Similar to the pre-iteration, we always keep a certain number of functions, namely twice the amount of eigenfunctions we are interested in, i.e., $2\,\Neigs$. Note that, in contrast to the parallel steps within the pre-iteration, we consider here a combined iteration of all remaining candidates. 

A summary of the procedure is given in Algorithm~\ref{alg:hats} and obtained numerical results for the two- and three-dimensional case are discussed in the subsequent section. 
\begin{algorithm}
\setstretch{1.15}
\caption{Approximation of first~$\Neigs$ localized eigenstates}
\label{alg:hats}
\begin{algorithmic}[1]
	\State {\bf Input}: $\Neigs$, $\Kpre$, $\Kpost$, $\ell_H$, $\eta$
	\vspace{0.5em}
	\State define mesh $\calT^H$ with $H = 2^{-\ell_H}$
	\State $m = |\calN_0^H|$
	\State coefficient matrix of hat functions $\Phi = [\varphi_1^H, \dots, \varphi_m^H] \in \R^{n,m}$
	\vspace{0.5em}
	\For{$k=1$ {\bf to} $\Kpre$} \Comment start pre-iteration
		\State $\Phi = \calP_1(M_h \Phi)$ \label{line:pinvit}
		\State $\Phi_j = \Phi_j / \Vert \Phi_j\Vert_{M_h}$,\ $j=1,\dots, m$
		\State $\text{energies} = \text{diag} \big( \Phi^T A_h \Phi \big)$
		\State $[\text{energies}, \text{idx}] = \text{sort} (\text{energies})$
		\State $m = \max(\lfloor\eta m\rfloor, 3\,\Neigs )$
		\State $\Phi = [\Phi_{\text{idx(1)}}, \dots, \Phi_{\text{idx(m)}}]$ 
	\EndFor
	\vspace{0.5em}
	\For{$k=1$ {\bf to} $\Kpost$} \Comment start post-iteration
		\State $\Phi = \calP_3(M_h \Phi)$ 
		\State $\Phi_j = \Phi_j / \Vert \Phi_j\Vert_{M_h}$,\ $j=1,\dots, m$
		\State $\mathbf{M} = \Phi^T M_h \Phi \in \R^{m,m}$ 
		\State $\mathbf{A} = \Phi^T A_h \Phi \in \R^{m,m}$ 
		\State solve eigenvalue problem $\mathbf{M} \alpha = \mu \mathbf{A} \alpha$ \label{line:evp}
		\State resulting eigenpairs~$(\alpha_1, \mu_1), \dots, (\alpha_m, \mu_m)$ with $\mu_1 \le \dots \le \mu_m$ \label{line:cutoff}
		\State $m = \max(\lfloor\eta m\rfloor, 2\,\Neigs )$
		\State $\Phi = [\Phi \alpha_1, \dots \Phi\alpha_m]$ 
\EndFor 
	
\end{algorithmic}
\end{algorithm}
\begin{remark}
The simple pcg steps in the pre-iteration of Algorithm~\ref{alg:hats} (line~\ref{line:pinvit}) may also be replaced by~$\Kpre$ steps of lopcg, cf.~\cite{Kny00,Kny01}. Since the main purpose of the pre-iteration is the detection of regions of localization, we consider here only this simple variant.  
\end{remark}
\begin{remark}
In order to accelerate the iteration and improve the stability of the method, one may consider additional cut-offs. In particular, one may set components of $\alpha_j$ in line~\ref{line:cutoff} of Algorithm~\ref{alg:hats} to zero if they are beneath a certain threshold. This also increases the level of locality. 	 
\end{remark}
The procedure of Algorithm~\ref{alg:hats} assumes that the first eigenfunctions are indeed localized. Thus, global eigenstates as they would appear for periodic potentials are not well-approximated by this method. However, one may adapt the selection process in the algorithm to such an extent that periodic structures are at least detected. For this, one may substitute the fixed-ratio criterion by a selection step which is based on the energies of the considered functions. In a periodic structure, where all hat functions would have a comparable energy, we would then not cut off any functions, meaning that the parameter~$m$ does not decrease and that the eigenvalue problem in line~\ref{line:evp} of the algorithm is still of large dimension. 
%
%
\section{Numerical Examples}\label{sec:numerics}
We perform several numerical tests in order to explore the performance and feasibility of the proposed method. In the first experiment we consider the two-dimensional random potential, which was already used in Section~\ref{sec:precond} for the illustration of the approaches to find regions of localization. Second, we consider a three-dimensional speckle potential leading to huge eigenvalue problems which quickly overcharge standard eigenvalue solvers. Finally, we apply the method to the nonlinear Gross-Pitaevskii eigenvalue problem. 

All computations have been performed with Matlab on an {\em HPC Infiniband cluster} (1.7 TB RAM, 2 Tesla V100 32GB GPUs). The reference solutions, which are used for the error plots, are obtained by the Matlab solver~{\em eigs} with tolerance~$10^{-12}$ for the 2D examples and~$10^{-10}$ in 3D, both with a maximum of~$100$ iterations. 
The code  is available as supplementary material. 
%
%
\subsection{Random potential in 2D}
In this first numerical experiment we consider a random potential in two space dimensions with parameters
\[
  \ell_\eps = 7, \quad
  \ell_h = 9, \quad
  \ell_H = 6, \quad
  \alpha = 1, \quad
  \beta = 5\cdot\eps^{-2}.
\]
We are interested in the first five eigenstates of lowest energy. The outcome of the pre-iteration with~$\eta=0.2$ was already illustrated in Figure~\ref{fig:hats_pre}, leading to a small number of regions in which we expect the smallest eigenstates to localize. Applying the Ritz-Rayleigh method and the additional selection process, we end up in a ten-dimensional subspace of~$V_h$, where all basis functions have local support. The five functions with lowest energy then serve as approximations of the eigenfunctions~$u_1, \dots, u_5$. The results are very convincing and depicted in Figure~\ref{fig:hats_post}. For a comparison with the actual eigenstates we refer to Figure~\ref{fig:landscape} (left). 
%
\begin{figure}
\includegraphics[width=4.8cm, height=4.8cm]{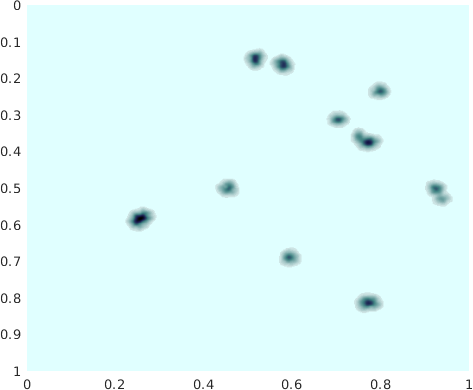}
\includegraphics[width=4.8cm, height=4.8cm]{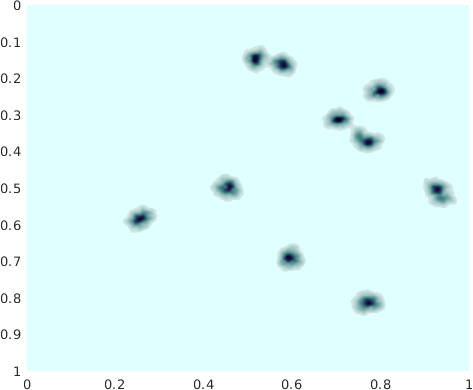}
\includegraphics[width=4.8cm, height=4.8cm]{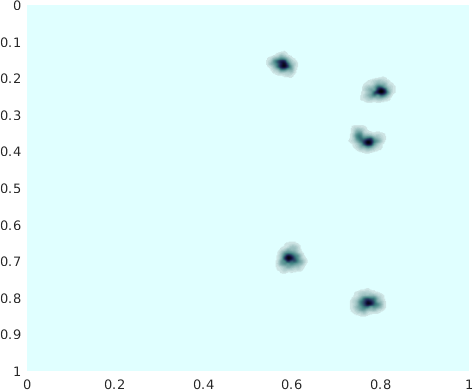}
\caption{Outcome of Algorithm~\ref{alg:hats} with the default parameters after one (left), three (middle), and five (right) Ritz-Rayleigh steps. The two pictures on the left show the sum of all remaining functions whereas the picture on the right only includes the five functions of lowest energy. Cf.~Figure~\ref{fig:landscape} for a reference solution.}
\label{fig:hats_post}
\end{figure}

As mentioned above, the pre-iteration only provides very rough approximations of the eigenstates and serves more the finding of an appropriate starting subspace for the Rayleigh-Ritz method. As a result, the assignments of the localization regions to the actual eigenstates may be inaccurate in the beginning. In the present example, four Rayleigh-Ritz steps were needed until the fifth eigenstate was correctly assigned. This effect can also be observed in the convergence plot in Figure~\ref{fig:conv}. Therein, the relative error in~$\lambda_5$ drops significantly from step~$6$ to step~$7$. Such effects are directly influenced by gaps within the spectrum of the Schr\"odinger operator. Here, the fifth and sixth eigenvalues are given by~$2.6076\cdot 10^4$ and $2.6093 \cdot 10^4$ and thus, at close quarters.
%
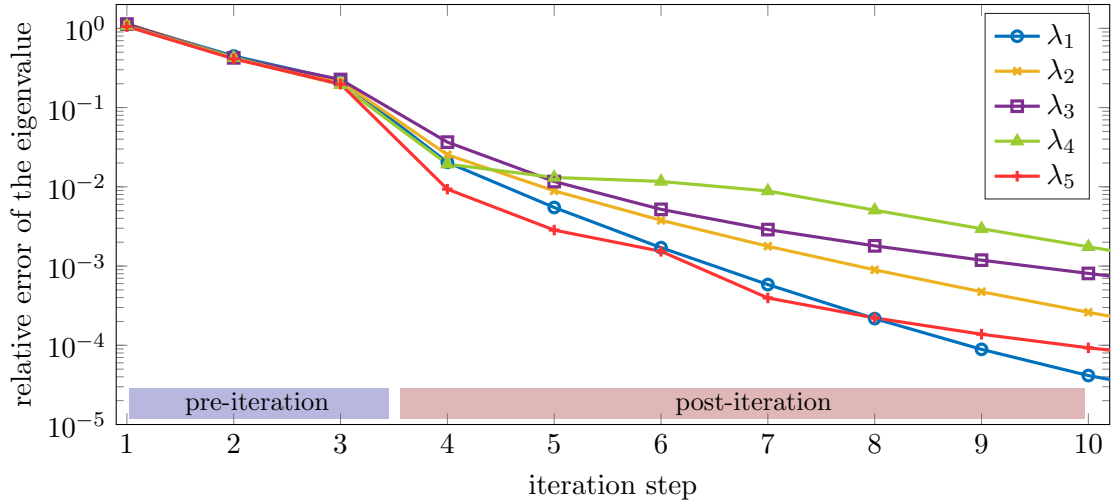
\begin{figure}
%
%
\definecolor{mycolor1}{rgb}{0,0.447,0.741} 
\definecolor{mycolor2}{rgb}{0.92900,0.69400,0.12500} 
\definecolor{mycolor3}{rgb}{0.494,0.184,0.556} 
\definecolor{mycolor4}{rgb}{0703,0.195,0.195} 
\definecolor{mycolor5}{rgb}{0.602,0.8,0.195} 
\begin{tikzpicture}

\begin{axis}[%
width=5.2in,
height=2.2in,
at={(0.758in,0.481in)},
scale only axis,
xmin=0.9,
xmax=10.2,
ymode=log,
ymin=1e-05,
ymax=2,
yminorticks=true,
xlabel={iteration step},
ylabel={relative error of the eigenvalue},
axis background/.style={fill=white},
legend style={legend cell align=left, align=left, draw=white!15!black}
]
\addplot[color=mycolor1, very thick, mark=o, mark options={solid, mycolor1}]
  table[row sep=crcr]{%
1	1.11809701128542\\
2	0.447863593912641\\
3	0.222008486795527\\
4	0.0203691916340474\\
5	0.00548669842870558\\
6	0.00170713373196864\\
7	0.00058409557928613\\
8	0.000217086838900512\\
9	8.88128851745432e-05\\
10	4.15287993897018e-05\\
11	2.33250213978689e-05\\
12	1.6074500030499e-05\\
13	1.31190632778257e-05\\
};
\addlegendentry{$\lambda_1$}

\addplot [color=mycolor2, very thick, mark=x, mark options={solid, mycolor2}]
  table[row sep=crcr]{%
1	1.14509724290677\\
2	0.428456310326161\\
3	0.216480213709656\\
4	0.0253855553670962\\
5	0.00889961390820672\\
6	0.00378094448074982\\
7	0.00177899578727905\\
8	0.000895335021677764\\
9	0.000473792871379199\\
10	0.000260898290704115\\
11	0.000148594986623768\\
12	8.73963379965306e-05\\
13	5.32815244666687e-05\\
};
\addlegendentry{$\lambda_2$}

\addplot[color=mycolor3, very thick, mark=square, mark options={solid, mycolor3}]
  table[row sep=crcr]{%
1	1.13316576360569\\
2	0.424440896918019\\
3	0.226241484599292\\
4	0.036669071171522\\
5	0.0116742428080186\\
6	0.00519721000728162\\
7	0.00288532158681638\\
8	0.00179855565861745\\
9	0.00118704543191072\\
10	0.000806984367172902\\
11	0.000558176852978596\\
12	0.00039074849416528\\
13	0.000276351730870785\\
};
\addlegendentry{$\lambda_3$}

\addplot [color=mycolor5, very thick, mark=triangle, mark options={solid, mycolor5}]
  table[row sep=crcr]{%
1	1.07321837037324\\
2	0.424871732737029\\
3	0.194175919362505\\
4	0.0193404169694789\\
5	0.013135489760899\\
6	0.0116861473505683\\
7	0.00886810555336363\\
8	0.00507175236453924\\
9	0.00295866038527987\\
10	0.00175077334326421\\
11	0.00104834429293744\\
12	0.000634791215232589\\
13	0.000388990362802362\\
};
\addlegendentry{$\lambda_4$}

\addplot[color=mycolor4, very thick, mark=+, mark options={solid, mycolor4}]
  table[row sep=crcr]{%
1	1.05488066780747\\
2	0.412354332135557\\
3	0.197561850483418\\
4	0.00934426710075177\\
5	0.00284358081423333\\
6	0.00153506706333368\\
7	0.00039581618697217\\
8	0.000221614977317625\\
9	0.000137917300584423\\
10	9.28922998045962e-05\\
11	6.67418378181651e-05\\
12	5.06668559652952e-05\\
13	4.03408100547789e-05\\
};
\addlegendentry{$\lambda_5$}
\end{axis}
%
\fill[gray!60!blue, opacity=0.4] (2.1,1.3) -- (5.55,1.3) -- (5.55,1.7) -- (2.1,1.7);
\node at (3.8,1.5) {\small pre-iteration};
\fill[gray!60!red, opacity=0.4] (5.7,1.3) -- (14.8,1.3) -- (14.8,1.7) -- (5.7,1.7);
\node at (10.4,1.5) {\small post-iteration};
\end{tikzpicture}%
\caption{Convergence history of the relative error of the first five eigenvalues~$\lambda_1,\dots,\lambda_5$. Iteration steps 1--3 are the pre-iteration, the remaining steps are part of the post-iteration of Algorithm~\ref{alg:hats}.}
\label{fig:conv}
\end{figure}

Figure~\ref{fig:spectrum} shows that the proposed methods also works if we are interested in a larger number of eigenvalues and eigenfunctions. If we only consider the pre-iteration, the relative error of the eigenvalues seems to be almost constant which is in agreement with the observations in~\cite{ArnDFJM19}. In contrast to the landscape function approach we are able to control the accuracy of the method by the number of post-iterations. Further, the method is flexible enough to recognize when there are several eigenfunctions located in a single valley. 
%
\begin{figure}
\input{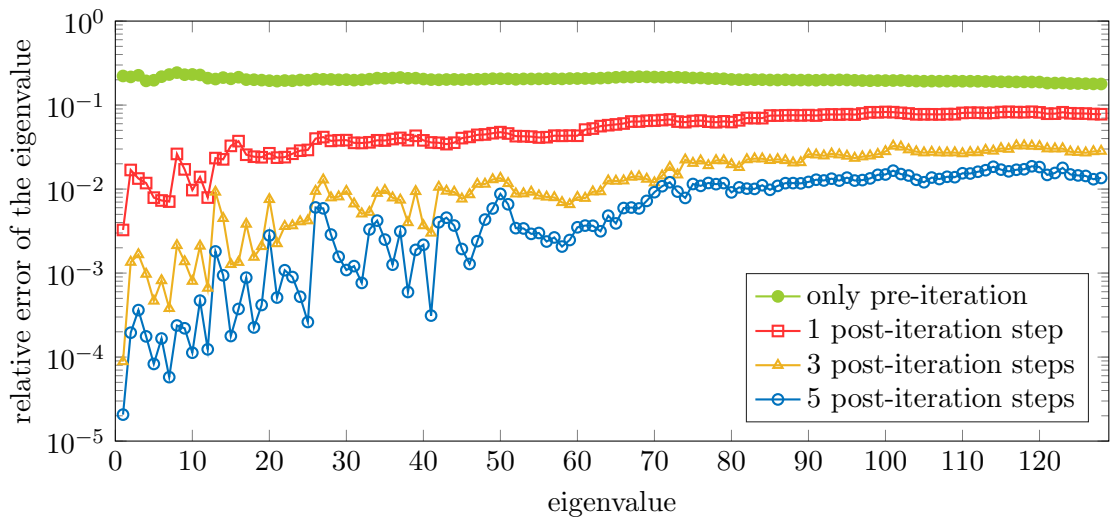}
\caption{Reliability of the method for a large number ($1/\eps=128$) eigenvalues.}
\label{fig:spectrum}
\end{figure}
%
%
\subsection{Speckle potential in 3D}
The second example considers the linear Schr\"odinger eigenvalue problem~\eqref{eq:EVP:strong} in three space dimensions with a speckle-like potential as it is used in actual experiments~\cite{FalFI08}. As parameters we choose 
\[
  \ell_\eps = 5, \quad
  \ell_h = 6, \quad
  \ell_H = 4, \quad
  \alpha = 1, \quad
  \beta = 5\cdot\eps^{-2}.
\]
An illustration of the potential is given in Figure~\ref{fig:eigComp3D}. As for the previous example we compare the outcome of Algorithm~\ref{alg:hats} with the results obtained by the {\em eigs} solver in Matlab. As a consequence, we are not able to exceed level~$\ell_h=6$, as this marks the limit of the integrated eigenvalue solver. 

The approximation of the first eigenfunction and the corresponding result obtained by {\em eigs} are compared in Figure~\ref{fig:eigComp3D}. The displayed approximations are computed with~$\Kpre= 3$ pre- and~$\Kpost=5$ post-iteration steps. A comparison of computation times for various mesh levels of~$\calT^\eps$ and $\calT^h$ are summarized in the following table: \medskip

\begin{center}
\begin{tabular}{l||r|r|r||r|r|r||}
	& \multicolumn{3}{|c||}{$\ell_h = \ell_\eps+1$} & \multicolumn{3}{c||}{$\ell_h = \ell_\eps+2$} \\ \hline
	& $\ell_\eps = 4$ & $\ell_\eps = 5$\phantom{i} & $\ell_\eps = 6$\phantom{n} & $\ell_\eps = 3$ & $\ell_\eps = 4$\phantom{i} & $\ell_\eps = 5$\phantom{n} \\[0.1\normalbaselineskip] \hline\hline
	Matlab {\em eigs} & $7.15\,s$ & $279.10\,s$ & $\infty\phantom{nn}$ & $7.21\,s$ & $228.60\,s$ & $\infty\phantom{nn}$ \\[0.15\normalbaselineskip]
	Algorithm~\ref{alg:hats} & $4.86\,s$ & $57.65\,s$ & $1788.02\,s$ & $6.43\,s$ & $74.99\,s$ & $1242.41\,s$ \\
\end{tabular}
\end{center}
%
\begin{figure}
\includegraphics[width=4.8cm, height=4.8cm]{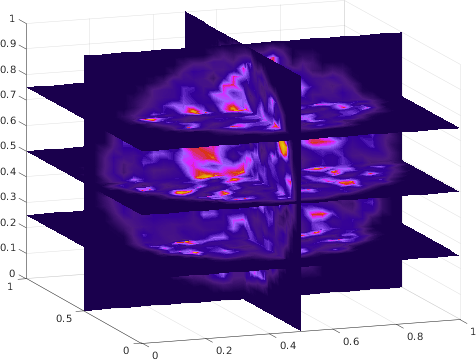}
\includegraphics[width=4.8cm, height=4.8cm]{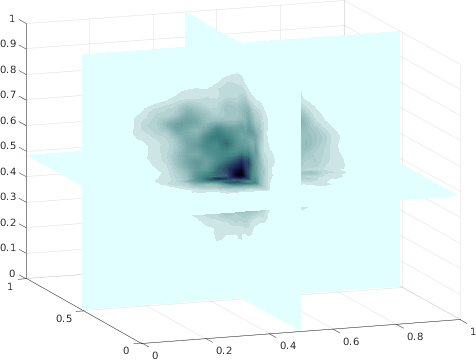}
\includegraphics[width=4.8cm, height=4.8cm]{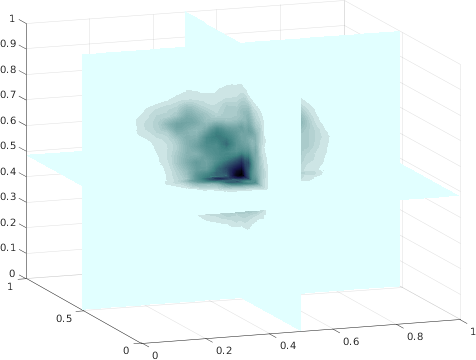}
\caption{Speckle potential (left) and comparison of the first eigenfunction (middle) and its numerical approximation obtained by Algorithm~\ref{alg:hats} (right).}
\label{fig:eigComp3D}
\end{figure}
%
%
\subsection{Nonlinear Gross-Pitaevskii eigenvalue problem}
In this final example we consider the nonlinear version of the Schr\"odinger equation, describing quantum-physical processes with interaction. This GPEVP has the form 
\begin{align*}
  -\Delta u + Vu + \delta|u|^2 u= \lambda u
\end{align*}
with~$\delta\ge 0$ regulating the nonlinearity. For moderate values of~$\delta$ similar localization results for the groundstates can be observed~\cite{AltPV18}. As a result, the here presented localization preserving preconditioner promises good approximation results. We emphasize that this nonlinear case is not easily treated by the landscape function approach. Considering again a finite element discretization as in Section~\ref{sec:evp:fem}, we obtain the nonlinear matrix eigenvalue problem 
\[
  A_h u_h + \delta N_h(u_h)u_h = \lambda M_h u_h,  
\]
where~$N_h(u_h)$ equals the finite-dimensional approximation of the nonlinearity. Inspired by the inverse iteration for the linear case, i.e., $u^{n+1}_h = A_h^{-1} M_h u^n_h$, one may consider the iteration 
\[
  u^{n+1}_h = (A_h + \delta N_h(u^n_h))^{-1} M_h u^n_h 
\]
with an additional normalization step. This then leads to the results shown in Figure~\ref{fig:nonlinear}, illustrating that the groundstate can be well approximated by local functions also in the nonlinear case. It goes without saying that the iteration scheme may be replaced by more advanced methods, e.g., based on gradient flows~\cite{BaoD04, HenP18ppt,AltHP19ppt}. 
%
\begin{figure}
	\includegraphics[width=4.8cm, height=4.8cm]{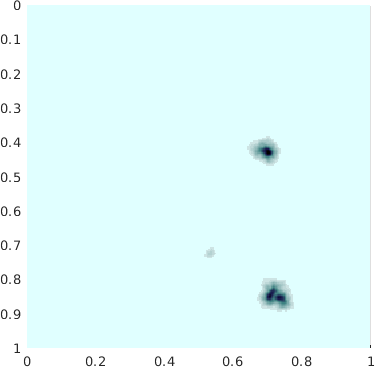}
	\includegraphics[width=4.8cm, height=4.8cm]{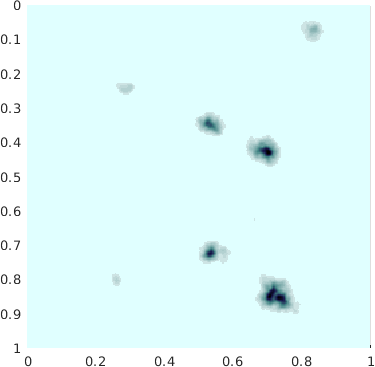}
	\includegraphics[width=4.8cm, height=4.8cm]{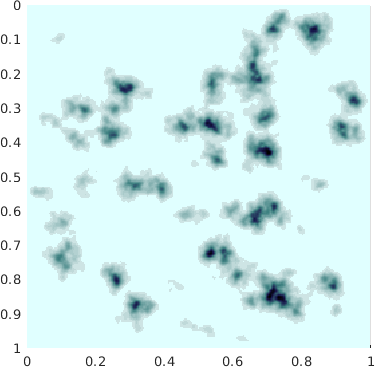}
\caption{Approximation of the groundstate for the Gross-Pitaevskii eigenvalue problem for~$\delta=1$ (left), $\delta=10$ (middle), and~$\delta=100$ (right) for a two-dimensional random potential with~$\ell_\eps=7$, $\beta = 5\cdot\eps^{-2}$. Results after 100 iteration steps with initial function being constant 1.} 
\label{fig:nonlinear}
\end{figure}
%
%
%
\section{Conclusion}\label{sec:conclusion}
In this paper, we have constructed a novel iterative scheme using analytical inside and adapting numerical analysis techniques used in localization proofs of the eigenstates \cite{AltHP18ppt}. The novel method is solely based on local operations (relative to the oscillation/correlation length of the potential) and, hence, able to approximate eigenfunctions of the random Schr\"odinger operator up to high precision. For this, we exploit the fact that oscillatory, high-amplitude random potentials lead to a localization of the lowermost eigenstates, which allows an efficient computation. 

The numerical experiments prove the applicability of the method also for the three-dimensional case as well as the corresponding nonlinear eigenvalue problem. Future research aims to further develop this approach in view of other applications with similar localization effects (such as light in a disordered medium~\cite{NatureLight97}) as well as time-dependent problems. 
%
%
\newcommand{\etalchar}[1]{$^{#1}$}

\end{document}